\documentclass[11pt]{amsart}
\usepackage{}
\usepackage{cases}
\usepackage{graphicx}
\usepackage{amsmath}
\usepackage{amssymb}
\usepackage{amsfonts}
\usepackage[mathscr]{eucal}
\setcounter{MaxMatrixCols}{30}

\makeatletter
\def\numberwithin#1#2{\@ifundefined{c@#1}{\@nocnterrr}{
  \@ifundefined{c@#2}{\@nocnterr}{
  \@addtoreset{#1}{#2}
  \toks@\expandafter\expandafter\expandafter{\csname the#1\endcsname}
  \expandafter\xdef\csname the#1\endcsname
    {\expandafter\noexpand\csname the#2\endcsname
     .\the\toks@}}}}
\makeatother
\numberwithin{equation}{section}

\newtheorem{theorem}{Theorem}[section]

\newtheorem{lemma}[theorem]{Lemma}

\newtheorem*{problem*}{Problem 1}

\newtheorem{remark}[theorem]{Remark}

\newtheorem{thm}{Theorem}



\begin{document}
\title[Monotonicity of functionals for conformal Ricci flow]
{Monotonicity of functionals along conformal Ricci flow}
\thanks{$^\ast$ The corresponding author, email: jianhongwang@163.com}

\author{Fengjiang Li}
\address{School of Mathematical Sciences, East China Normal University,
500 Dongchuan Road, Shanghai 200241,  P. R. of China}
\email{lianyisky@163.com}

\author{Peng Lu}
\address{Department of Mathematics, University of Oregon, Eugene, OR 97403, USA}
\email{penglu@uoregon.edu}

\author{Jianhong Wang$^\ast$}
\address{School of Mathematical Sciences, East China Normal University,
500 Dongchuan Road, Shanghai 200241,  P. R. of China}
\email{jianhongwang@163.com}

\author{Yu Zheng}
\address{School of Mathematical Sciences, East China Normal University,
500 Dongchuan Road, Shanghai 200241,  P. R. of China}
\email{zhyu@math.ecnu.edu.cn}

\subjclass[2010]{53C44}
\date{\today}

\keywords{Conformal Ricci flow, conjugate heat equation, monotonicity of functionals}

\begin{abstract}
The main purpose of this note is to construct two functionals of the positive solutions to
the conjugate heat equation associated to the metrics evolving
by the conformal Ricci flow on closed manifolds.
We show that they are nondecreasing by calculating the explicit evolution
formulas of these functionals.
For the entropy functional we give another proof of the monotonicity by establishing
a pointwise formula.
Moreover, we show that the increase are strict unless the metrics are Einstein.
\end{abstract}

\maketitle

\section{Introduction}

Monotonicity formulas (geometric quantities which are monotone along geometric flows)
play a key role in geometry and analysis, when being used deliberately they give various kind
geometric information including rigidity results.
Two outstanding examples are Huisken's monotonicity formula for the mean curvature flow \cite{Hui90}
and Perelman's $\mathcal{W}$-entropy for the Ricci flow \cite{Per02}.
Inspired by the work \cite{EKNT08} and \cite{Mul10}, a general approach to find monotone quantities
on manifolds with evolving metrics is developed by H.-X. Guo, R. Philipowski, and A. Thalmaier
in \cite{GPT13}.

The conformal Ricci flow (CRF in short)  is proposed by A. Fischer \cite{Fis04}
and is further studied in, for example, \cite{LQZ14}, \cite{SZ18},  and \cite{LQZ19}.
Going into a little details CRF considers a family of metrics $g=g(t), t\in[0,T)$, on an $(m+1)$-dimensional
closed manifold $M$, which is evolved by
\begin{equation}\label{eq crf 1}
\left\{
\begin{aligned}
& \partial_t g=-2\left ( \operatorname{Rc}_{g(t)}+(p(t)+m)g(t)\right ) ~~~~
        &\text{on}~~~&M\times [0,T),&\\
& \left ( -\Delta_{g(t)}+(m+1) \right )p(t)=\frac{1}{m}|\operatorname{Rc}_{g(t)}+mg(t)|^2
 ~~~~~  &\text{on}~~~~&M\times[0,T),& \\
&g(0)=g_{0},&
\end{aligned}     \right.
\end{equation}
where $\operatorname{Rc}$ is the Ricci curvature, $p(t)$ is a scalar function,
 and $T$ is a positive constant.
We assume that the initial metric $g_0$ has constant scalar curvature $-m(m+1)$.

In this note, we apply the approach in \cite{GPT13} to CRF
and find two new monotone quantities $\tilde{\mathcal{E}}(g,u)$  (see (\ref{eq functional mathcal E})
for its definition and Lemma \ref{thm E functional monotone})
and entropy $\tilde{\mathcal{W}}(g,u)$ defined by
\begin{equation} \label{eq entropy W for CRF}
\tilde{\mathcal{W}}(g,u)=\int_{M}\big(|\nabla \ln u|^2+2(m+1) \ln u\big)ud{\mu_g},
\end{equation}
where $u$ is a positive function.
More precisely we will consider the conjugate heat equation associated to CRF
\begin{equation}\label{eq 2 old}
\partial_ t u=-\Delta_{g(t)} u+(m+1)p(t)u, \quad t \in [0,T).
\end{equation}
It is easy to check that $\frac{d}{dt}\int_M u (t) d\mu_{g(t)} =0$.
We will derive the following formula which is the main result of this note.

\begin{theorem} \label{thm funct W monotone}
Let $g(t)$ be a solution of CRF (\ref{eq crf 1}) and let $u(x,t)$
be a positive solution of the conjugate heat equation (\ref{eq 2 old}).
Define $\tilde{\mathcal{W}}(t) =\tilde{\mathcal{W}}(g(t), u(t))$.
Then we have

\begin{align}
\frac{d \tilde{\mathcal{W}}(t)}{dt}=&2 \int_M \left |\operatorname{Rc}
 +mg-\nabla\nabla \ln u \right |^2u d\mu_{g(t)}
+\frac{2}{m}\int_M|\operatorname{Rc}+mg|^2ud\mu_{g(t)}  \notag \\
&+2\int_M|\nabla \ln u|^2pu d \mu_{g(t)}+2\int_M|\nabla \ln u|^2u d \mu_{g(t)}
\label{eq 14 old}
\end{align}
for $ t \in [0,T)$. Hence $\tilde{\mathcal{W}}(t)$ is nondecreasing, and the increase is strict
unless $g(t)$ is an Einstein metric.
\end{theorem}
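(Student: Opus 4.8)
The plan is to differentiate $\tilde{\mathcal{W}}(t)$ termwise, exploiting two structural features of CRF that I would record first. (i) The scalar curvature is frozen, $R_{g(t)}\equiv -m(m+1)$ along the flow — this is exactly what the constraint equation in (\ref{eq crf 1}) is designed to enforce, and it is where the hypothesis on $g_0$ is used — so that $\nabla R\equiv 0$ and, by the contracted second Bianchi identity, $\operatorname{div}(\operatorname{Rc})\equiv 0$. (ii) The pressure is nonnegative, $p(t)\geq 0$: applying the maximum principle to $(-\Delta+(m+1))p=\frac1m|\operatorname{Rc}+mg|^2\geq 0$, at an interior minimum of $p$ one has $\Delta p\geq 0$, so $(m+1)p\geq 0$ there, hence everywhere. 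I would also note the variational identities $\partial_tg^{ij}=2(R^{ij}+(p+m)g^{ij})$ and, using (i), $\partial_t\, d\mu_{g(t)}=-(m+1)p\, d\mu_{g(t)}$.

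Setting $w:=\ln u$, the conjugate heat equation (\ref{eq 2 old}) becomes $\partial_t w=-\Delta w-|\nabla w|^2+(m+1)p$, and combining with the volume evolution yields the clean identity $\partial_t(u\, d\mu_{g(t)})=-(\Delta w+|\nabla w|^2)u\, d\mu_{g(t)}=-\Delta u\, d\mu_{g(t)}$ (which re-proves $\frac{d}{dt}\int_M u\, d\mu_{g(t)}=0$ and is used repeatedly). I then split $\tilde{\mathcal{W}}=A+B$, with $A=\int_M|\nabla w|^2u\, d\mu_{g(t)}$ and $B=2(m+1)\int_M w\,u\, d\mu_{g(t)}$. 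Differentiating $B$ and integrating by parts via $\nabla u=u\nabla w$ (so $\int_M(\Delta w)u\, d\mu_{g(t)}=-\int_M|\nabla w|^2u\, d\mu_{g(t)}$, and likewise for the $w(\Delta w)$ term) is short: the frozen scalar curvature kills the would-be leftover $\int_M w\,(R+m(m+1))u\, d\mu_{g(t)}$, and one gets $\frac{dB}{dt}=2(m+1)^2\int_M p\,u\, d\mu_{g(t)}+2(m+1)A$.

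The substance is $\frac{dA}{dt}$. Differentiating $A=\int_M g^{ij}\partial_iw\,\partial_jw\,u\, d\mu_{g(t)}$: the variation of $g^{ij}$ gives $2\int_M\operatorname{Rc}(\nabla w,\nabla w)u+2\int_M(p+m)|\nabla w|^2u$; the variation of $w$ gives the transport term $2\int_M\langle\nabla w,\nabla\partial_tw\rangle u$; and the variation of the measure gives $-\int_M|\nabla w|^2(\Delta w+|\nabla w|^2)u$. Substituting $\partial_tw$ and integrating by parts repeatedly leaves third- and fourth-order terms in $w$, which I would collapse with the integrated Bochner identity for the drift Laplacian $\Delta+\langle\nabla w,\nabla\cdot\rangle$ (self-adjoint for $u\, d\mu_{g(t)}$),
\begin{equation*}
\int_M(\Delta w+|\nabla w|^2)^2u\, d\mu_{g(t)}=\int_M\big(|\nabla\nabla w|^2+\operatorname{Rc}(\nabla w,\nabla w)-(\nabla\nabla w)(\nabla w,\nabla w)\big)u\, d\mu_{g(t)};
\end{equation*}
meanwhile the $p$-terms coalesce, after shifting one Laplacian onto $p$, into $-2(m+1)\int_M(\Delta p)u\, d\mu_{g(t)}$, which the constraint equation rewrites as $-2(m+1)^2\int_M p\,u\, d\mu_{g(t)}+\frac{2(m+1)}{m}\int_M|\operatorname{Rc}+mg|^2u\, d\mu_{g(t)}$. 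The net result is
\begin{align*}
\frac{dA}{dt}&=2\int_M|\nabla\nabla w|^2u+4\int_M\operatorname{Rc}(\nabla w,\nabla w)u+2\int_M(p+m)|\nabla w|^2u\\
&\quad-2(m+1)^2\int_M p\,u+\frac{2(m+1)}{m}\int_M|\operatorname{Rc}+mg|^2u,
\end{align*}
with all integrals against $d\mu_{g(t)}$.

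Adding $\frac{dA}{dt}+\frac{dB}{dt}$, the $\mp 2(m+1)^2\int_M p\,u$ cancel, and it remains to recognize the curvature part as a completed square. Using $\nabla R\equiv0$ one has $\int_M\langle\operatorname{Rc},\nabla\nabla w\rangle u=-\int_M\operatorname{Rc}(\nabla w,\nabla w)u$ (integrate by parts; $\operatorname{div}\operatorname{Rc}=0$) and $\int_M(\Delta w)u=-\int_M|\nabla w|^2u$; running the expansion of $|\operatorname{Rc}+mg-\nabla\nabla w|^2$ in reverse then matches the collected terms exactly to $2\int_M|\operatorname{Rc}+mg-\nabla\nabla w|^2u+\frac2m\int_M|\operatorname{Rc}+mg|^2u+2\int_M|\nabla w|^2p\,u+2\int_M|\nabla w|^2u$, which is (\ref{eq 14 old}) with $w=\ln u$. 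For the conclusion, $u>0$, $p\geq0$ and $m\geq1$ make each of the four summands on the right of (\ref{eq 14 old}) nonnegative, so $\tilde{\mathcal{W}}$ is nondecreasing; if $\frac{d\tilde{\mathcal{W}}}{dt}(t_0)=0$ then in particular $\int_M|\operatorname{Rc}+mg|^2u\, d\mu_{g(t_0)}=0$, whence $\operatorname{Rc}_{g(t_0)}=-m\,g(t_0)$, i.e. $g(t_0)$ is Einstein (and then the constraint forces $p(t_0)\equiv0$, so the flow is stationary at $t_0$). I expect the main obstacle to be the bookkeeping in $\frac{dA}{dt}$: keeping every integration-by-parts term straight while the background metric is itself evolving, and spotting that the integrated weighted Bochner identity is precisely the tool that turns the higher-order terms into $|\nabla\nabla\ln u|^2$; the remaining steps are standard CRF structure and routine computation.
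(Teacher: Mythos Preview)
Your argument is correct and the bookkeeping checks out: the weighted Bochner identity you quote is exactly $\int_M(Lw)^2u=\int_M|\nabla\nabla w|^2u+\int_M\operatorname{Rc}(\nabla w,\nabla w)u-\int_M(\nabla\nabla w)(\nabla w,\nabla w)u$ with $Lw=\Delta w+|\nabla w|^2$, and together with the identity $\int_M(\Delta w+|\nabla w|^2)|\nabla w|^2u=-2\int_M(\nabla\nabla w)(\nabla w,\nabla w)u$ it reduces the non-$p$ part of $\frac{dA}{dt}$ precisely to $2\int|\nabla\nabla w|^2u+2\int\operatorname{Rc}(\nabla w,\nabla w)u+2m\int|\nabla w|^2u$. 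The square then completes as you say.

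Your route is a third alternative to the paper's two. The paper's first proof invokes the Guo--Philipowski--Thalmaier formula for $\frac{d^2\tilde{\mathcal{E}}}{dt^2}$ as a black box and then rearranges integrals to isolate $\frac{d\tilde{\mathcal{W}}}{dt}$; its second proof is Perelman-style, computing the pointwise quantity $\square^*v$ for $v=(2\Delta f-|\nabla f|^2-2(m+1)f)e^{-f}$ and integrating. You instead differentiate $\tilde{\mathcal{W}}=A+B$ directly and work entirely at the integrated level, using the drift-Bochner identity to collapse the higher-order terms. This is self-contained (no external framework needed) and lighter than the pointwise computation; the paper's second proof, on the other hand, produces the pointwise formula $\square^*v=-2|\operatorname{Rc}+mg+\nabla\nabla f|^2u-\frac{2}{m}|\operatorname{Rc}+mg|^2u-2|\nabla f|^2u-2p|\nabla f|^2u+4\operatorname{div}(p\nabla u)$, which carries more local information even though the divergence term obstructs a pointwise Harnack. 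One small remark: your aside that ``the frozen scalar curvature kills the would-be leftover $\int_M w(R+m(m+1))u$'' in the $\frac{dB}{dt}$ step is spurious---no $R$ term actually arises when differentiating $B$; the constant scalar curvature enters only through $\partial_t d\mu=-(m+1)p\,d\mu$ and $\operatorname{div}\operatorname{Rc}=0$.
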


We will give two proofs of this theorem in \S \ref{sec proof of main result} and \S \ref{sec second proof},
respectively. In the first proof we will see how the definition of $\tilde{\mathcal{W}}$ in
(\ref{eq entropy W for CRF}) comes up. The second proof is a mimic of the proof of Perelman's
entropy monotonicity given in \cite[\S 9.1]{Per02}.

It should be pointed out that there is a long list of works on monotonicity formulas for various
geometric flow (including static manifolds), many of them are inspired by the work in \cite{Hui90}
and \cite{Per02}.
We mention a few here as examples,
L. Ni's entropy on static Riemannian manifolds \cite{Ni04},
List's entropy for extended Ricci flow \cite{Lis08},
M\"{u}ller's entropy for Ricci-Harmonic flow \cite{Mul12},
and L.-F. Wang's entropy for Ricci-Bourguignon flow \cite{Wa19}.
The fancy part of Theorem \ref{thm funct W monotone} is that it is a monotonicity formula
for coupled parabolic and elliptic equations.

\section{Proof of Theorem \ref{thm funct W monotone}} \label{sec proof of main result}

To facilitate our discussion of the derivation of  (\ref{eq entropy W for CRF}),
we now give a brief review of the general approach in \cite{GPT13}.
 Let $\hat{M}^n$ be an n-dimensional closed manifold.
We consider an abstract geometric flow of metrics $g(t)$ on $M$ defined by
 \begin{equation} \label{eq abstract flow gu}
\partial_t g=-2\alpha, \quad t \in [0,T),
\end{equation}
where $\alpha$ is a time-dependent symmetric two-tensor on $\hat{M}$.
Let $A =\operatorname{tr}_g \alpha$.
H.-X. Guo et al. construct the analog of Perelman's $\mathcal{F}$-energy and
$\mathcal{W}$-entropy by defining
\begin{equation} \label{eq abstract flow energy gu}
 \mathcal{F}(g,\phi)=\int_{\hat{M}}(A+|\nabla \phi|^{2})e^{-\phi}d\mu_g
 \end{equation}
for some function $\phi$, and
\begin{equation} \label{eq abstract flow entropy gu}
\mathcal{W}(g,\psi,\tau)=\int_{\hat{M}}\big(\tau(A+|\nabla \psi|^{2})+\psi-n\big)
(4\pi\tau)^{-\frac{n}{2}}e^{-\psi}d\mu_g
\end{equation}
for $\tau >0$ and function $\psi$ satisfying
$\int_{\hat{M}} (4\pi\tau)^{-\frac{n}{2}}e^{-\psi}d\mu_g =1$, respectively.
They prove the following

\begin{thm}\label{T1.1} \text{(\cite[Theorem 4.2 and 5.2]{GPT13})}
(i) Suppose that $\phi$ satisfies the backward heat equation
\begin{equation} \label{eq phi backward energy}
\partial_t \phi =-\Delta \phi+|\nabla \phi|^{2}-A, \quad t \in [0,T).
\end{equation}
Then under the flow (\ref{eq abstract flow gu})  the energy $\mathcal{F}(t)
=\mathcal{F}(g(t), \phi(t))$ satisfies
$$ \frac{d \mathcal{F}(t)}{dt}=2\int_{\hat{M}}\big(|\alpha+\nabla\nabla \phi|^{2}+
\hat{\Theta}(-\nabla \phi)\big)e^{-\phi}d\mu_g,$$
where
$$\hat{\Theta}(V)=( \operatorname{Rc}-\alpha)(V,V)+\langle\nabla A-2
\operatorname{Div}(\alpha),V\rangle+
\frac{1}{2}( \partial_t A-2|\alpha|^2-\Delta A)$$
for any vector field $V$.
In particular, $\mathcal{F}(t)$ is nondecreasing
when $\hat{\Theta} \geq 0$, and the increase is strict unless
$$\alpha+\nabla\nabla \phi=0 \quad \text{and} \quad
\hat{\Theta} (-\nabla \phi)=0.$$

\noindent (ii) Suppose that $\psi$ satisfies the backward heat equation
\begin{equation} \label{eq psi backward entropy}
\partial_t \psi=-\Delta \psi+|\nabla \psi|^{2}-A+\frac{n}{2\tau},
\quad t \in [0,T),
\end{equation}
where $\tau=T-t$.
Then under the flow (\ref{eq abstract flow gu})  the entropy $\mathcal{W}(t)
=\mathcal{W}(g(t), \psi(t),$ $ T-t)$ satisfies
$$ \frac{d \mathcal{W}(t)}{dt}=\int_{\hat{M}}2\tau\big(|\alpha+\nabla\nabla \psi-
\frac{1}{2\tau}g|^{2}+ \hat{\Theta} (-\nabla \psi)\big)
(4\pi\tau)^{-\frac{n}{2}}e^{-\psi}d\mu_g.$$
In particular,  $\mathcal{W}(t)$ is  nondecreasing
when $\hat{\Theta} \geq 0$,
and the increase is strict unless
$$\alpha+\nabla\nabla \psi-\frac{1}{2\tau}g=0 \quad \text{and} \quad
\hat{\Theta}(-\nabla \psi)=0.$$
\end{thm}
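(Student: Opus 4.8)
The plan is to prove part (i) in full and then obtain part (ii) by the same bookkeeping with the extra scale factor $\tau$. Throughout I work on the closed manifold $\hat{M}$, abbreviate $d\mu=d\mu_g$, and record at the outset the three basic variation formulas under the flow (\ref{eq abstract flow gu}): $\partial_t g^{ij}=2\alpha^{ij}$, $\partial_t(d\mu)=-A\,d\mu$, and, for the evolving connection, $g^{ij}\partial_t\Gamma^k_{ij}=-(2\operatorname{Div}\alpha-\nabla A)^k$. The first two are immediate, and the third comes from $\partial_t\Gamma^k_{ij}=-g^{kl}(\nabla_i\alpha_{jl}+\nabla_j\alpha_{il}-\nabla_l\alpha_{ij})$ by contraction. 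These are precisely the sources of the tensor $\operatorname{Div}\alpha$ and of the $A$-derivatives appearing in $\hat{\Theta}$; since $\alpha$ is an abstract symmetric two-tensor with no Bianchi identity, none of them will cancel automatically as they would for Ricci flow.

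For part (i), I would first observe that the backward heat equation (\ref{eq phi backward energy}) is engineered so that the weighted measure is conserved: $\partial_t(e^{-\phi}d\mu)=(\Delta\phi-|\nabla\phi|^2)e^{-\phi}d\mu=-\Delta(e^{-\phi})\,d\mu$, whose integral over $\hat{M}$ vanishes. I then differentiate $\mathcal{F}(t)=\int_{\hat{M}}(A+|\nabla\phi|^2)e^{-\phi}d\mu$ factor by factor. The term $\partial_t|\nabla\phi|^2=2\alpha(\nabla\phi,\nabla\phi)+2\langle\nabla\partial_t\phi,\nabla\phi\rangle$ is expanded using (\ref{eq phi backward energy}) and the Bochner formula $\langle\nabla\Delta\phi,\nabla\phi\rangle=\tfrac12\Delta|\nabla\phi|^2-|\nabla\nabla\phi|^2-\operatorname{Rc}(\nabla\phi,\nabla\phi)$, which is what produces the Hessian and Ricci contributions. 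After collecting terms I integrate by parts against the conserved measure, repeatedly using the self-adjointness identity $\int_{\hat{M}}\Delta h\,e^{-\phi}d\mu=\int_{\hat{M}}\langle\nabla h,\nabla\phi\rangle e^{-\phi}d\mu$. The $\langle\nabla|\nabla\phi|^2,\nabla\phi\rangle$ contributions cancel between the $|\nabla\phi|^2$-variation and the measure variation, leaving an integrated expression whose only surviving first-order term is a multiple of $\langle\nabla A,\nabla\phi\rangle$ and whose only curvature term is $\operatorname{Rc}(\nabla\phi,\nabla\phi)$.

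The decisive step is then to rewrite this integrated identity in the stated ``completed square plus $\hat{\Theta}$'' form. The perfect square $|\alpha+\nabla\nabla\phi|^2$ does not appear on the nose; rather, one reintroduces the cross term $2\langle\alpha,\nabla\nabla\phi\rangle$ and the divergence term $\langle\operatorname{Div}\alpha,\nabla\phi\rangle$ via the identity $\int_{\hat{M}}\langle\operatorname{Div}\alpha,\nabla\phi\rangle e^{-\phi}d\mu=\int_{\hat{M}}\big(\alpha(\nabla\phi,\nabla\phi)-\langle\alpha,\nabla\nabla\phi\rangle\big)e^{-\phi}d\mu$, together with $\int_{\hat{M}}\Delta A\,e^{-\phi}d\mu=\int_{\hat{M}}\langle\nabla A,\nabla\phi\rangle e^{-\phi}d\mu$. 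Matching the result against $2\int_{\hat{M}}(|\alpha+\nabla\nabla\phi|^2+\hat{\Theta}(-\nabla\phi))e^{-\phi}d\mu$ then reduces to a coefficient check: the $|\alpha|^2$ hidden in the scalar part of $\hat{\Theta}$ cancels the $|\alpha|^2$ from the square, and the quadratic, linear, and scalar pieces of $\hat{\Theta}$ are exactly the residues forced by the two identities above. I expect this recombination — confirming that every $\operatorname{Div}\alpha$, $\nabla A$, $\Delta A$, $\partial_t A$, and $\alpha(\nabla\phi,\nabla\phi)$ term lands with the precise coefficient prescribed by the definition of $\hat{\Theta}(V)$ — to be the \textbf{main obstacle}, since it is pure sign-and-coefficient tracking with nothing cancelling generically. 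Once the formula is established, nonnegativity of $|\alpha+\nabla\nabla\phi|^2$ gives monotonicity when $\hat{\Theta}\ge 0$, and equality in $\frac{d\mathcal{F}}{dt}\ge 0$ forces both $\alpha+\nabla\nabla\phi=0$ and $\hat{\Theta}(-\nabla\phi)=0$ pointwise.

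For part (ii) I would run the identical scheme on $\mathcal{W}(t)=\int_{\hat{M}}[\tau(A+|\nabla\psi|^2)+\psi-n]\,v\,d\mu$ with $v=(4\pi\tau)^{-n/2}e^{-\psi}$ and $\tau=T-t$, so that $\dot{\tau}=-1$. The extra $\tfrac{n}{2\tau}$ in the backward equation (\ref{eq psi backward entropy}) is precisely what keeps $\int_{\hat{M}}v\,d\mu=1$ and makes the $\psi-n$ and $\dot{\tau}$ contributions combine cleanly. Differentiating and repeating the Bochner and weighted integration-by-parts steps, the only structural change is that completing the square now uses the shift by $\tfrac{1}{2\tau}g$, yielding $|\alpha+\nabla\nabla\psi-\tfrac{1}{2\tau}g|^2$, while the curvature and divergence residue is the same $\hat{\Theta}(-\nabla\psi)$; the overall factor $2\tau$ and the weight $(4\pi\tau)^{-n/2}e^{-\psi}$ ride along as bookkeeping. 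Monotonicity and the rigidity characterization then follow exactly as in part (i).
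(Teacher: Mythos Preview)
The paper does not supply its own proof of this statement: Theorem~A is quoted verbatim from \cite[Theorems~4.2 and~5.2]{GPT13} and is used as a black box (in fact the authors explicitly say they will \emph{not} apply Theorem~A directly but only borrow the method). So there is no in-paper proof to compare against.

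That said, your outline is the standard argument and matches what \cite{GPT13} does. The ingredients you list --- the variation formulas $\partial_t g^{ij}=2\alpha^{ij}$, $\partial_t d\mu=-A\,d\mu$, $g^{ij}\partial_t\Gamma^k_{ij}=-(2\operatorname{Div}\alpha-\nabla A)^k$; conservation of the weighted measure from (\ref{eq phi backward energy}); the Bochner identity producing the $|\nabla\nabla\phi|^2$ and $\operatorname{Rc}(\nabla\phi,\nabla\phi)$ terms; the weighted integration-by-parts identity $\int\langle\operatorname{Div}\alpha,\nabla\phi\rangle e^{-\phi}d\mu=\int(\alpha(\nabla\phi,\nabla\phi)-\langle\alpha,\nabla\nabla\phi\rangle)e^{-\phi}d\mu$ used to manufacture the cross term of the completed square --- are exactly the right ones, and your identification of the coefficient-matching step as the only real work is accurate. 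Part~(ii) is indeed the same computation with the $\tfrac{1}{2\tau}g$ shift absorbing the $\dot\tau=-1$ and $\tfrac{n}{2\tau}$ contributions. Nothing in your sketch is wrong or missing at the level of strategy.
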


\vskip .2cm
On closed Riemannian manifold $(M^{m+1},g)$, we define the
Boltzmann-Shannon entropy $\tilde{\mathcal{E}}(g,u)$ by
\begin{equation} \label{eq functional mathcal E}
\tilde{\mathcal{E}}(g,u)=\int_M u\ln ud \mu_{g},
\end{equation}
where $u$ is a positive function.
Towards the proof of Theorem \ref{thm funct W monotone} we start with the following

\begin{lemma} \label{thm E functional monotone}
Let $g(t)$ be a solution of CRF  (\ref{eq crf 1}) and let $u(x,t)$ be a positive solution
to the conjugate heat equation (\ref{eq 2 old}).
Denote $\tilde{\mathcal{E}}(t) = \tilde{\mathcal{E}}(g(t),u(t))$, then we have
\begin{equation}\label{eq 3 old}
\frac{d \tilde{\mathcal{E}}(t)}{dt}=\int_M\big(|\nabla \ln u|^2+(m+1)p\big)ud \mu_g.
\end{equation}
Thus $\tilde{\mathcal{E}}(t)$ is nondecreasing and the increase is strict
unless $g(t)$ is an Einstein metric.
\end{lemma}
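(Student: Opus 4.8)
The plan is a direct computation, combining the conjugate heat equation \eqref{eq 2 old}, integration by parts on the closed manifold $M$, and the structural feature of CRF that the normalization $R_{g(t)}\equiv -m(m+1)$ is preserved (this is exactly why $p$ is chosen as in the second line of \eqref{eq crf 1}: it is the Lagrange multiplier enforcing the constraint, see \cite{Fis04}). Writing the flow in the abstract form $\partial_t g=-2\alpha$ with $\alpha=\operatorname{Rc}_{g(t)}+(p+m)g$, one has $A=\operatorname{tr}_g\alpha=R+(m+1)(p+m)=(m+1)p$, and therefore $\partial_t\,d\mu_{g}=-(m+1)p\,d\mu_{g}$.

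First I would differentiate $\tilde{\mathcal{E}}(t)=\int_M u\ln u\,d\mu_g$ to get
\[
\frac{d\tilde{\mathcal{E}}(t)}{dt}=\int_M(\partial_t u)(\ln u+1)\,d\mu_g-\int_M(m+1)pu\ln u\,d\mu_g,
\]
then substitute $\partial_t u=-\Delta u+(m+1)pu$. In the resulting expression the two terms proportional to $(m+1)pu\ln u$ cancel, the term $\int_M(m+1)pu\,d\mu_g$ remains, the term $-\int_M\Delta u\,d\mu_g$ vanishes since $M$ is closed, and one integration by parts gives $-\int_M(\Delta u)\ln u\,d\mu_g=\int_M\langle\nabla u,\nabla\ln u\rangle\,d\mu_g=\int_M|\nabla\ln u|^2u\,d\mu_g$. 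Collecting the surviving terms yields exactly \eqref{eq 3 old}. It is worth noting that it is precisely the normalization $R\equiv-m(m+1)$ that makes the measure contribution $-\int_M(m+1)pu\ln u\,d\mu_g$ (rather than a term involving the full $-Au\ln u$) combine cleanly; without it an extra $-\int_M u\ln u\,(R+m(m+1))\,d\mu_g$ would spoil the formula.

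For the monotonicity statement, the key observation is that $p\ge 0$. This follows from the elliptic equation in \eqref{eq crf 1} by the maximum principle: since its right-hand side $\tfrac1m|\operatorname{Rc}_{g(t)}+mg|^2$ is nonnegative, evaluating $(-\Delta+(m+1))p\ge 0$ at a spatial minimum point of $p$ forces $(m+1)\min_M p\ge 0$, so $p\ge 0$ everywhere. Both integrands on the right side of \eqref{eq 3 old} are then nonnegative, so $\tilde{\mathcal{E}}(t)$ is nondecreasing.

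Finally, for strictness I would argue by contraposition. If $\frac{d\tilde{\mathcal{E}}}{dt}=0$ at some time $t$, then in particular $\int_M(m+1)pu\,d\mu_g=0$; since $p\ge 0$ and $u>0$, this forces $p\equiv 0$ at that time, and plugging back into the elliptic equation gives $|\operatorname{Rc}_{g(t)}+mg(t)|^2\equiv 0$, i.e. $g(t)$ is Einstein with $\operatorname{Rc}_{g(t)}=-mg(t)$. Hence, whenever $g(t)$ is not Einstein we have $p\not\equiv 0$, so $\int_M(m+1)pu\,d\mu_g>0$ and the increase is strict. I do not expect a genuine obstacle: the only points requiring care are the maximum-principle argument for $p\ge 0$ and bookkeeping of the cancellations, both of which hinge on the constant scalar curvature normalization.
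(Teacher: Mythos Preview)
Your proof is correct and follows essentially the same route as the paper: differentiate $\tilde{\mathcal{E}}$, use $\partial_t d\mu_g=-(m+1)p\,d\mu_g$, substitute the conjugate heat equation, integrate by parts, and conclude via $p\ge 0$. The only cosmetic difference is that the paper simply cites \cite{Fis04} for $p\ge 0$ and invokes \eqref{eq crf 1} for the Einstein conclusion, whereas you spell out the maximum-principle argument and the use of the elliptic constraint explicitly.
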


\begin{proof}
By a direct computation or using \cite[Theorem 2.1]{GPT13} we have
\begin{align*}
\frac{d \tilde{\mathcal{E}}(t)}{dt}&=\int_M \left (\partial_ t u \cdot \ln u
+ \partial_t u-(m+1)pu\ln u \right )d \mu_{g(t)}\\
&=\int_M \big(-\Delta u\ln u+(m+1)pu\big)d \mu_{g(t)}\\
&=\int_M\big(|\nabla\ln u|^2+(m+1)p\big)ud \mu_{g(t)},
\end{align*}
where we have used the integration by parts to get the last inequality.

It is known that $p\geq 0$ \cite{Fis04}, hence $\frac{d \tilde{\mathcal{E}}(t)}{dt} \geq 0$.
When $\frac{d \tilde{\mathcal{E}}(t)}{dt}=0$, then $p(t)=0$,
by equation (\ref{eq crf 1}) we have $\operatorname{Rc}_{g(t)}=-mg(t)$.
\end{proof}

In the following we will not directly use Theorem A to prove Theorem \ref{thm funct W monotone},
rather we use the method in \cite{GPT13} to derive the correct formula for functional $\tilde{\mathcal{W}}$
as given in (\ref{eq entropy W for CRF})
which is not any of the functionals defined in (\ref{eq abstract flow energy gu}) and
(\ref{eq abstract flow entropy gu}) when $\alpha$ is defined by CRF (\ref{eq crf 1}).

\vskip.2cm
\noindent \emph{Proof} of Theorem \ref{thm funct W monotone}.\
When the abstract flow (\ref{eq abstract flow gu}) is CRF  (\ref{eq crf 1}), we have
$A=(m+1)p$. It follows from \cite[Theorem 2.1]{GPT13} that
\begin{equation}\label{eq 5 old}
\frac{d^2 \tilde{\mathcal{E}}(t)}{dt^2} =2\int_M\left ( |\operatorname{Rc}+(p+m)g
-\nabla\nabla\ln u|^2+ \Theta( \nabla \ln u)\right ) ud \mu_g,
\end{equation}
where
\begin{align*}
\Theta(\nabla\ln u)=& -(p+m)|\nabla\ln u|^2+(m-1)\langle\nabla p,\nabla\ln u\rangle \\
& +\frac{m+1}{2}(\partial_t p-\Delta p)-|\operatorname{Rc} +(p+m)g|^2.
\end{align*}

First we rewrite the following four integrals appeared in (\ref{eq 5 old}).
By integration by parts we have
\begin{align}
\int_M |\nabla\ln u|^2pud \mu_{g(t)} &=\int_M \langle\nabla\ln u, \nabla u\rangle pd \mu_{g(t)} \notag \\
&=-\int_M\langle\nabla p, \nabla u\rangle d \mu_{g(t)} -\int_M\Delta\ln u\cdot pud \mu_{g(t)} \notag \\
&=\int_M\Delta p\cdot ud \mu_{g(t)}-\int_M\Delta\ln u\cdot pud \mu_{g(t)}. \label{eq 7 old}
\end{align}
By (\ref{eq 3 old}) we have
\begin{equation} \label{eq 5 new add name}
\int_M |\nabla\ln u|^2ud \mu_{g(t)}=\frac{d \tilde{\mathcal{E}}(t)}{dt}-(m+1)\int_M pud \mu_{g(t)}.
\end{equation}
We also have
\begin{equation} \label{eq 5 new new add name}
\int_M\langle\nabla p,\nabla \ln u\rangle ud \mu_{g(t)}=-\int_M\Delta p\cdot ud \mu_{g(t)}.
\end{equation}
Finally using equation (\ref{eq 2 old}) and the integration by parts again we have
\begin{align}
\int_M \partial_t p \cdot ud \mu_{g(t)}&=\frac{d}{dt}\int_M pud\mu_{g(t)}
-\int_M p \cdot \partial_t ud \mu_{g(t)}+(m+1)\int_M p^2ud \mu_{g(t)} \notag \\
&=\frac{d}{dt}\int_M pud\mu_{g(t)}+\int_M p\Delta ud \mu_{g(t)} \notag \\
&=\frac{d}{dt}\int_M pud \mu_{g(t)}+\int_M u\Delta pd \mu_{g(t)}. \label{eq 8 old}
\end{align}
Plugging (\ref{eq 7 old}), (\ref{eq 5 new add name}), (\ref{eq 5 new new add name}),
 and (\ref{eq 8 old}) into (\ref{eq 5 old}), and rearranging terms we get
\begin{align}
&\frac{d}{dt}\int_{M}\big(|\nabla\ln u|^2+2m\ln u\big)ud \mu_{g(t)}  \notag \\
=& \frac{d^2 \tilde{\mathcal{E}}(t)}{dt^2}+2m \frac{d \tilde{\mathcal{E}}(t)}{dt}
-(m+1)\frac{d}{dt}\int_M pud \mu_{g(t)} \notag \\
= & 2\int_M| \operatorname{Rc}+(p+m)g-\nabla\nabla\ln u|^2ud \mu_{g(t)}  \notag \\
& -2\int_M| \operatorname{Rc}+(p+m)g|^2ud \mu_{g(t)} +2m(m+1)\int_M pud \mu_{g(t)}  \notag \\
& -2m\int_M \Delta p\cdot ud \mu_{g(t)}+ 2 \int_M \Delta\ln u\cdot pud \mu_{g(t)}. \label{eq 9 old}
\end{align}

For the further simplification we rewrite the following three expressions in (\ref{eq 9 old}).
Using scalar curvature $R(g(t))\equiv-m(m+1)$ and CRF (\ref{eq crf 1}) we have
\begin{align}
& | \operatorname{Rc}+(p+m)g-\nabla\nabla\ln u|^2   \notag \\
=& | \operatorname{Rc}+mg-\nabla\nabla\ln u|^2-2p\Delta\ln u+(m+1)p^2, \label{eq split pg I out} \\
& | \operatorname{Rc}+(p+m)g|^2=| \operatorname{Rc}+mg|^2+(m+1)p^2,
  \label{eq split out pg II}  \\
& \Delta p=(m+1)p-\frac{1}{m}|\operatorname{Rc}+mg|^2.  \label{eq 12 old}
\end{align}
Plugging equations (\ref{eq split pg I out}), (\ref{eq split out pg II}),  and (\ref{eq 12 old})
 into (\ref{eq 9 old}) and simplifying we get
\begin{align}
& \frac{d}{dt}\int_{M} \left (|\nabla\ln u|^2+2m\ln u \right )ud \mu_{g(t)} \notag \\
= & 2\int_M| \operatorname{Rc}+mg-\nabla\nabla\ln u|^2ud \mu_{g(t)}
-2\int_M\Delta\ln u\cdot pud \mu_{g(t)}.
\label{eq 11 old}
\end{align}

To simplify the last term in (\ref{eq 11 old}), using integration by parts and equation
(\ref{eq 12 old}) we have
\begin{align}
\int_M\Delta\ln u \cdot pu d \mu_{g(t)}
=&-\int_M |\nabla\ln u|^2pud \mu_{g(t)}+\int_M\Delta p\cdot ud \mu_{g(t)}  \notag \\
= & -\int_M |\nabla\ln u|^2pud\mu_{g(t)}+(m+1)\int_M pud \mu_{g(t)}  \notag \\
&      -\frac{1}{m}\int_M| \operatorname{Rc}+mg|^2ud \mu_{g(t)}   \notag \\
= & -\int_M |\nabla\ln u|^2pud\mu_{g(t)}-\int_M |\nabla\ln u|^2ud\mu_{g(t)} \notag \\
& +\frac{d \tilde{\mathcal{E}}(t)}{dt}  -\frac{1}{m}\int_M| \operatorname{Rc}
+mg|^2ud\mu_{g(t)},
\label{eq 13 old}
\end{align}
where we have used equation (\ref{eq 3 old}) to get the last equality.
Now we plug equation (\ref{eq 13 old}) into (\ref{eq 11 old}) and obtain
\begin{align*}
& \frac{d}{dt}\int_{M}\big(|\nabla\ln u|^2 + 2m\ln u\big)ud \mu_{g(t)}\\
=& 2\int_M |\operatorname{Rc} +mg-\nabla\nabla\ln u|^2ud \mu_{g(t)}
+2\int_M|\nabla\ln u|^2pud \mu_{g(t)} \\
& +2\int_M|\nabla \ln u|^2ud \mu_{g(t)} -2 \frac{d \tilde{\mathcal{E}}(t)}{dt}
 +\frac{2}{m}\int_M| \operatorname{Rc} +mg|^2u
d \mu_{g(t)}.
\end{align*}
This proves formula (\ref{eq 14 old}).

Since  pressure function $p \geq 0$, we have $\frac{d \tilde{\mathcal{W}}(t)}{dt} \geq 0$.
When $\frac{d \tilde{\mathcal{W}}(t)}{dt}=0$,  then each term on the
right-hand-side of equation (\ref{eq 14 old}) equals to zero. In particular,
$\operatorname{Rc}=-mg$.
\hfill $\square$

\begin{remark} Although $\partial_t p$ comes up in the proof of the
monotonicity of entropy $\tilde{\mathcal{W}}$,  $\tilde{\mathcal{W}}$ itself do not contain
any $p$-term. One way to think about it is that this is about the monotonicity formula for
the following flow
\[
 \partial_t g=-2\left ( \operatorname{Rc}_{g(t)}+ \left (
 \frac{1}{m}\left ( -\Delta_{g(t)}+(m+1) \right )^{-1}
 |\operatorname{Rc}_{g(t)}+mg(t)|^2 +m \right )g(t)\right ).
\]
\end{remark}

\section{Another proof of Theorem \ref{thm funct W monotone} and some remarks}
\label{sec second proof}

Following the idea of \cite[\S 9]{Per02} now we give another proof of
Theorem \ref{thm funct W monotone}.
Let $u=e^{-f}$ be a positive solution of (\ref{eq 2 old}) on closed manifold $M^{m+1}$,
then
\[
\partial_t f + \Delta f -| \nabla f|^2  +(m+1)p= 0.
\]
Note that we can rewrite the entropy
\begin{align*}
\tilde{\mathcal{W}}(g,e^{-f})& =\int_{M}\left ( |\nabla f|^2-2(m+1) f \right )e^{-f}d{\mu_{g}} \\
& =\int_{M}\left ( 2 \Delta f - |\nabla f|^2-2(m+1) f \right )e^{-f}d{\mu_{g}} .
\end{align*}
We will compute the evolution equation of
\begin{equation}
v= \left (2 \Delta f -| \nabla f|^2 -2(m+1) f \right  )e^{-f} \label{eq def v in Harnack}
\end{equation}
using backward parabolic operator $\square^* = -\partial_t -\Delta_{g(t)} +(m+1)p$.

Let
\[
w = 2 \Delta f -| \nabla f|^2 -2(m+1) f .
\]
 We compute
\begin{align*}
\square^* w =& -2 \left (\partial_t   + \Delta \right ) \Delta f
    + (\partial_t  + \Delta) | \nabla f|^2  +2(m+1) p  \Delta f  \\
& -(m+1) p | \nabla f|^2+ 2(m+1) \left (\partial_t + \Delta -(m+1)p \right ) f \\
= & -2 \Delta \left (\partial_t   + \Delta \right )  f -4R_{ij} \nabla_i \nabla_j f -4(p+m) \Delta f
-4 \nabla_i p \nabla_i f \\
& +  4R_{ij}\nabla_i f \nabla_j f + 2 |\nabla \nabla f|^2  +2(p+m)  | \nabla f|^2 + 2 \nabla_i
 |\nabla f|^2  \nabla_i f  \\
 &  +2(m+1) p  \Delta f  -(m+1) p | \nabla f|^2 \\
& + 2(m+1) \left (  | \nabla f|^2 - (m+1)p -(m+1)pf \right ),
\end{align*}
where to get the last equality we have used
\[
\partial_t  \Delta f =  \Delta \partial_t  f +2R_{ij} \nabla_i \nabla_j f +2(p+m) \Delta f
-(m-1) \nabla_i p \nabla_i f
\]
(see \cite[(S.5) on p.547]{CLN06}) and the parabolic Bochner formula
\begin{align*}
(\partial_t  + \Delta) | \nabla f|^2
=& 4R_{ij}\nabla_i f \nabla_j f + 2 |\nabla \nabla f|^2  +2(p+m)  | \nabla f|^2 \\
& + 2 \nabla_i  |\nabla f|^2  \nabla_i f -2(m+1) \nabla_i p
\nabla_i f.
\end{align*}

Using
\begin{align*}
 -2 \Delta \left (\partial_t   + \Delta \right )  f =&  -2  \Delta |\nabla f|^2 +2(m+1) \Delta p \\
= &- 4R_{ij}\nabla_i f \nabla_j f - 4 |\nabla \nabla f|^2 -4 \nabla_i  \Delta f \nabla_i f  \\
& +2(m+1)^2 p -\frac{2(m+1)}{m} |\operatorname{Rc} + mg|^2
\end{align*}
and by some simplification we get
\begin{align*}
\square^* w =& -2 | \operatorname{Rc} + mg + \nabla \nabla f|^2  - \frac{2}{m} |
 \operatorname{Rc} + mg|^2 -2 \nabla_i f \nabla_i w-2 | \nabla f|^2 \\
 & +(m-1) p w  -4 \nabla_i p \nabla_i f -4(m+1)pf.
\end{align*}

Now we compute
\begin{align}
& \square^* v =\square^* (wu) = \square^* (w e^{-f})  \notag \\
= & (\square^* w) u +w (\square^* u) -2 \nabla w \nabla u -  (m+1) pwu \notag \\
=&  -2 | \operatorname{Rc} + mg + \nabla \nabla f|^2 u  - \frac{2}{m} |
 \operatorname{Rc} + mg|^2 u  \notag \\
 & -2 | \nabla f|^2 u -4 \nabla_i p \nabla_i f u -4(m+1)pf u  -2pw u \notag \\
 =&  -2 | \operatorname{Rc} + mg + \nabla \nabla f|^2 u  - \frac{2}{m} |
 \operatorname{Rc} + mg|^2 u  \notag \\
 & -2 | \nabla f|^2 u -2 | \nabla f|^2 p u  +4 \operatorname{div}( p\nabla u).
 \label{eq ptwise box 3 star v}
\end{align}
The formula (\ref{eq 14 old}) follows from (\ref{eq ptwise box 3 star v}) easily,
we omit the detail.

\begin{remark}
Because of the term $ \operatorname{div}( p\nabla u)$ in formula (\ref{eq ptwise box 3 star v})
we are not able to prove a pointwise Harnack inequality for heat kernel function $u$
as given in \cite[Corollary 9.3]{Per02} and/or \cite[Theorem 1.2]{Ni04}.
\end{remark}

On a closed manifold $M^{m+1}$ we define functional
\begin{equation} \label{eq nu function tilde W}
\tilde{\nu} (g)  = \inf_{\int_M e^{-f} d \mu_g =1} \tilde{\mathcal{W}}(g,e^{-f})
\end{equation}
for each Riemannian metric $g$.
To see that $\tilde{\nu} (g)$ is a finite  number, recall that
the log-Sobolev inequality for $(M,g)$ says (\cite[\S 3]{Per02}, \cite[(1.8)]{Ni04})
that there is a constant $c_1 =c_1(M,g)$ such that
\begin{equation} \label{eq log sob const c1}
\int_M  \left (|\nabla f|^2 + f \right )e^{-f}d{\mu_{g}} \geq c_1
\end{equation}
for $f$ satisfying $\int_M e^{-f} d \mu_g =1$. Since $x e^{-x} \leq e^{-1}$ for any real $x$,
we have
\begin{align*}
\tilde{\mathcal{W}}(g,e^{-f}) = & \int_M  \left (|\nabla f|^2 + f \right )e^{-f}d{\mu_{g}}
-(2m+3) \int_M f e^{-f} d \mu_g \\
\geq & c_1 -(2m+3) e^{-1} \operatorname{vol}(M,g).
\end{align*}

An easy consequence the monotonicity formula (\ref{eq 14 old}) is

\begin{lemma}
$\tilde{\nu}(g(t))$ is monotone increasing under CRF (\ref{eq crf 1}).
\end{lemma}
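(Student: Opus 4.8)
The plan is to combine the monotonicity formula \eqref{eq 14 old} with a standard ``follow the minimizer'' argument, exactly as one does for Perelman's $\nu$-functional under Ricci flow. First I would note that for each fixed metric $g$ the infimum defining $\tilde{\nu}(g)$ in \eqref{eq nu function tilde W} is attained: the bound just established, $\tilde{\mathcal{W}}(g,e^{-f}) \geq c_1 - (2m+3)e^{-1}\operatorname{vol}(M,g)$, shows the functional is bounded below on the constraint set, and a routine direct-method/compactness argument (or invoking that the constrained minimizer solves the associated Euler--Lagrange equation) produces a smooth positive minimizer $u_0 = e^{-f_0}$ at time $t_0$. This is the one place where something must be checked rather than quoted, since the excerpt only asserts finiteness of $\tilde{\nu}$, not attainment; I expect this to be the main (though still routine) obstacle.

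Next, fix $t_1 < t_2$ in $[0,T)$. Let $u_1 = e^{-f_1}$ be a minimizer for $\tilde{\nu}(g(t_2))$, so $\tilde{\nu}(g(t_2)) = \tilde{\mathcal{W}}(g(t_2), u_1)$ with $\int_M u_1\, d\mu_{g(t_2)} = 1$. Solve the conjugate heat equation \eqref{eq 2 old} \emph{backward} from $t_2$ to $t_1$ with terminal data $u(t_2) = u_1$; since \eqref{eq 2 old} is already written as a backward-in-time (conjugate) equation, this is a well-posed backward problem and yields a positive solution $u(t)$ on $[t_1,t_2]$. The identity $\frac{d}{dt}\int_M u\, d\mu_{g(t)} = 0$ noted after \eqref{eq 2 old} guarantees the constraint $\int_M u(t_1)\, d\mu_{g(t_1)} = 1$ is preserved, so $u(t_1)$ is admissible in the variational problem for $\tilde{\nu}(g(t_1))$.

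Then I would invoke Theorem~\ref{thm funct W monotone}: since $u(t)$ solves \eqref{eq 2 old} along CRF, formula \eqref{eq 14 old} gives $\frac{d}{dt}\tilde{\mathcal{W}}(g(t), u(t)) \geq 0$ on $[t_1,t_2]$, hence
\[
\tilde{\mathcal{W}}(g(t_1), u(t_1)) \leq \tilde{\mathcal{W}}(g(t_2), u(t_2)) = \tilde{\nu}(g(t_2)).
\]
Finally, because $u(t_1)$ is admissible for the infimum defining $\tilde{\nu}(g(t_1))$, we get
\[
\tilde{\nu}(g(t_1)) \leq \tilde{\mathcal{W}}(g(t_1), u(t_1)) \leq \tilde{\nu}(g(t_2)),
\]
which is precisely the asserted monotonicity. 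The only subtlety beyond bookkeeping is the attainment of the infimum in Step~1; everything else is a direct application of the already-established evolution formula together with the conservation of the mass constraint.
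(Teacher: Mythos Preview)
Your proposal is correct and is precisely the ``well-known proof in \cite[\S 2.2]{Per02}'' to which the paper defers: pick a (near-)minimizer at the later time, evolve it backward by the conjugate heat equation (\ref{eq 2 old}), and apply the monotonicity (\ref{eq 14 old}) together with the preserved constraint $\int_M u\,d\mu_{g(t)}=1$. The only point you flag as a possible obstacle---attainment of the infimum in (\ref{eq nu function tilde W})---is in fact unnecessary, since taking an $\epsilon$-approximate minimizer $u_1$ at $t_2$ yields $\tilde{\nu}(g(t_1)) \leq \tilde{\nu}(g(t_2)) + \epsilon$ for every $\epsilon>0$, and hence the conclusion.
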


\begin{proof}
This can be proved as the well-known proof in \cite[\S 2.2]{Per02}.
\end{proof}

\begin{remark}
It is not clear to us whether the lower bound $c_1$ in (\ref{eq log sob const c1})
depends on $c_2$ only assuming that $c_2$ is the lower bound in  $\tilde{\mathcal{W}}(g,e^{-f})
\geq c_2$ for $\int_M e^{-f} d \mu_g =1$. If it is true then we will have the volume
$\kappa$-noncollapsing on scale $r=1$ for CRF $g(t)$.
\end{remark}

\begin{remark} A naive generalization to CRF of the monotonicity of the reduced volume as
in \cite[Theorem 1.4]{Mul10} does not hold since the nonnegative condition \cite[(1.6)]{Mul10} is false
for CRF.
\end{remark}


\end{document}